\theoremstyle{definition}
\theoremstyle{remark}
\numberwithin{equation}{section}
\newtheorem{lem}{\textbf{Lemma}}[section]
\newtheorem{thm}[lem]{\textbf{Theorem}}
\theoremstyle{definition}
\theoremstyle{definition}
\theoremstyle{remark}
\renewcommand{\sc}{\mathcal{S}}
\begin{document}

\setcounter{page}{1}

\title[ Some  properties of $T_{\mu}$]{Some  properties of the mapping  $T_{\mu}$    introduced by  a representation  in Banach and locally
convex spaces}

\author[ Soori ]{Ebrahim  Soori$^{1, *}$}

\address{   $^{1}$Department  of Mathematics, Lorestan University, Khoramabad,  Lorestan, Iran.
 }
\email{\textcolor[rgb]{0.00,0.00,0.84}{sori.e@lu.ac.ir; sori.ebrahim@yahoo.com(E. Soori)}}


 \thanks{   $^{*}$Corresponding author}

\subjclass[2010]{Primary 47H09  Secondary 47H09, 47H10.}

\keywords{Representation,  Nonexpansive,  Attractive point, Directed graph, Mean.}

\begin{abstract}
 Let $ \sc=\{T_{s}:s\in S\} $  be   a representation of  a semigroup $S$.    First,     we prove that the mapping  $T_{\mu}$ introduced by a mean on a subspace  of $l^{\infty}(S)$    has  many properties of  the mappings in  the representation $ \sc$, in Banach  spaces. Then we consider a directed graph and then we define a $Q$-$G$-nonexpansive mapping in locally convex spaces  and show that   $T_{\mu}$ is a $Q$-$G$-nonexpansive mapping if $T_{s}$ is a $Q$-$G$-nonexpansive mapping for each $s\in S$. Then we define $Q$-$G$-attractive point of $ \sc$ and show if a point  $a$ is a  $Q$-$G$-attractive point of $ \sc$ then $a$ is  a  $Q$-$G$-attractive point of $T_{\mu}$.
\end{abstract} \maketitle

\section{Introduction}
Consider   a  reflexive Banach space $E$,   a nonempty closed  convex subset  $ C $ of $E$,     a semigroup $ S $ and   a   representation  $ \sc=\{T_{s}:s\in S\} $  of $S$ as    self  mappings on $C$  and  let $X$ be a subspace of $l^{\infty}(S)$   and $\mu$ be a mean on $X$.
We write $T_{\mu}x $ instead of
$\int\!T_{t}x\, d\mu(t)\/$.
 The relations between the  representation $ \sc$ and  the mapping  $T_{\mu}$ have been interesting for many years. For example we can see \cite{lz, lztt, saeidi, eees}.

In this paper, we study some relations between  the representation $ \sc$ and $T_{\mu}$ in Banach   and locally convex spaces.
\section{Methods  and preliminaries}
  The   space of all bounded real-valued functions defined on $ S $ with supremum
norm is denoted by  $l^{\infty}(S)$.   $l_{s}$ and  $r_{s}$ in $l^{\infty}(S)$  are defined  as follows:  $  (l_{t}g )(s) = g (ts)$ and   $(r_{t}g )(s) = g (st)$,  for all $ s \in S$,  $ t \in S$  and $g\in l^{\infty}(S)$.\\
Suppose that $ X$ is a subspace of $l^{\infty}(S)$ containing 1 and let $ X^{*}$ be its topological  dual space. An element $m $ of $X^{*} $ is said to be a mean on $X$, provided  $\|m\|=m(1)=1$.  For $ m\in X^{*}$ and $g \in X$,   $m_{t}(g(t))$ is often  written instead of $ m(g )$. Suppose that $ X $ is left invariant (respectively, right
invariant), i.e., $ l_{t}(X) \subset X$ (respectively, $ r_{t}(X) \subset X$) for each $ s \in S$. A mean $m$ on $X$ is called    left invariant (respectively, right invariant), provided $m(l_{t}g ) =m(g )$ (respectively, $m(r_{t}g)= m(g )$) for each $t \in S $ and $g \in X$. $X $ is    called      left (respectively, right) amenable if $X$ possesses a left (respectively, right) invariant mean. $X$ is amenable, provided $X$ is both left and right amenable. \\ Let $D$ be a directed set in $X$. A net $\{m_{\alpha}:\alpha\in D\}$ of means on $X$ is called  left regular, provided
\[
\mathop{lim}\limits_{\alpha\in D} \|l^{*}_{t}m_{\alpha}-m_{\alpha}\|= 0,
\]
for every $t\in S$, where $l_{t}^{*}$ is the adjoint operator of $l_{t}$.

Let   $E$ a reflexive Banach space. Let $g$ be a  function on  $S$  into $E$ such that the weak closure of $\{g(s):s\in S\}$ is weakly
compact and suppose that  $X$ is a subspace of $l^{\infty}(S)$ owning  all the functions $s \rightarrow \left<g(s),x^{\ast}\right>$ with  $ x^{\ast} \in E^{\ast}  $. We know from \cite{hi} that, for any $ m\in X^{\ast}$, there
exists a unique element $ g_{m}$ in $E$ such that $\left<g_{m},x^{\ast}\right>=m_{s}\left<f(s),x^{\ast}\right>$
for all $ x^{\ast}\in E^{\ast} $. We denote such $g_{m}$ by $\int g(s) m(s)$. Moreover, if $ m$ is a mean on $X$, then from \cite{ki}, $\int g(s) m(s) \in \overline{\rm{co}}\,\{g(s):s\in S\})$, where $\overline{\rm{co}}\,\{g(s):s\in S\})$ denotes the closure of the convex hull of $\{g(s):s\in S\}$.

Recall the following definitions:
\begin{enumerate}
\item
suppose that $S$ be semigroup.  Let $C$ be a nonempty closed and convex subset of $E$. Then, a family   $ \sc=\{T_{s}:s\in S\} $ of mappings from $C$ into itself is called  a  representation of $S$ as   mappings on $C$ into itself provided    $T_{st}x=T_{s}T_{t}x$ for all $s, t \in S$ and $x\in C$.
Note that,    Fix($ \sc$) is
the set of common fixed points of $\sc$, that is  \\ Fix($ \sc$)=$\bigcap_{ s\in S}\{x\in C: T_{s}x=x  \}$,

\item  Let $E$ be a real Banach space and $C$ be a subset of $E$. We denote by $\text{Fix}(T)$ the
set of fixed points of a mapping $T : C \rightarrow C$. In this paper, a mapping $T : C\rightarrow C$
is called:
\begin{enumerate}
\item nonexpansive provided  $\|Tx - Ty\| \leq \|x - y\|$ for all $x, y \in C$;
\item  quasi
nonexpansive  \cite{saeidi} provided $\|Tx - f\| \leq \|x - f\|$ for all $x \in C$ and $f \in \text{Fix}(T)$;
\item strongly
quasi nonexpansive \cite{saeidi} provided $\|Tx - f\| \leq \|x - f\|$ for all $x \in  C \setminus \text{Fix}(T)$ and $f \in \text{Fix}(T)$;
\item  $F$-quasi nonexpansive (for a subset $F \subseteq \text{Fix}(T)$) provided \\ $\|Tx- f\| \leq \|x - f\|$ for all
$x \in C$ and $f \in  \text{Fix}(T)$;
\item strongly $F$-quasi nonexpansive \cite{saeidi} (for a subset $F \subseteq \text{Fix}(T)$)
provided  $\|Tx - f\| \leq \|x - f\|$ for all $x \in  C \setminus \text{Fix}(T)$ and $f   \in \text{Fix}(T)$, and
\item retraction \cite{saeidi} provided $T^{2} = T$,
\end{enumerate}

\item
 let  $E$ be a Banach space and $C \subset E$. A mapping $T: C\rightarrow C$ is called asymptotically nonexpansive \cite{lz} provided  for all $x, y\in C$ the following inequality holds:
    \begin{equation}\label{asymn}
      \limsup _{n\rightarrow \infty} \|T^{n}x-T^{n}y\| \leq \|x-y\|,
    \end{equation}
\item Suppose that $ \sc=\{T_{s}:s\in S\} $  is a representation of a semigroup $S$ on a set $C$ in a Banach space $E$. An element $a \in E$ is called  an asymptotically attractive point of $S$ for $C$ provided
  \begin{equation}\label{asymnep attr}
      \limsup _{n\rightarrow \infty} \|a-T^{n}_{t}x\| \leq \|a-x\|,
    \end{equation}
    for all $t \in S$ and $x \in C$,

\item consider     a   family of   seminorms  $Q$  on a        locally
convex space $X$ which determines the topology of $X$ and       a nonempty closed and convex subset $C$ of    $X$. Let
$G = (V(G),E(G))$ be a directed graph such that $V(G) =C$ ( to see more details refer to \cite{aka}).  A mapping $ T$ of $ C $ into itself is called  $Q$-$G$-nonexpansive
 if $q(Tx - Ty) \leq q(x - y)$,  whenever $(x, y) \in  E(G)$ for any   $x, y \in C$ and $q \in Q$, and a mapping $f$ is a $Q$-contraction on $E $ if  $ q(f (x) -f (y)) \leq \beta q(x - y)$,  for all $x, y \in E$  such that  $0 \leq \beta < 1$,

 \item consider     a   family of   seminorms  $Q$  on a        locally
convex space $X$ which determines the topology of $X$.
   The locally convex topology $\tau_{Q}$ is separated if and only if the family of seminorms
$Q$ possesses the following property:
for each  $x \in  X \backslash \{0\}$ there exists $q \in Q$ such that $q(x)  \neq 0$ or equivalently   $\displaystyle\bigcap_{q \in Q} \{ x \in X: q(x)=0\}= \{0\}$ ( see \cite{barbu}),
\item  suppose that $ \sc=\{T_{s}:s\in S\} $  is a representation of a semigroup $S$ on a subset $C$ of  a Banach space $E$.  $ \sc$ is   called a uniformly asymptotically nonexpansive  representation, if  for each  $x,y \in C$,
    \begin{align}\label{dhhghj}
     \limsup_{n\rightarrow \infty}  \sup_{t}
\|T_{t}^{n}x - T_{t}^{n}y\|  \leq    \|x-y\|.
    \end{align}
\item  suppose that $ \sc=\{T_{s}:s\in S\} $  is a representation of a semigroup $S$ on a subset $C$ of  a Banach space $E$ and $a$ be an element in $C$.  $ a$ is   called a uniformly asymptotically attractive   point, if  for each  $x  \in C$,
    \begin{align}\label{dhhghj2}
     \limsup_{n\rightarrow \infty}  \sup_{t}
\|a - T_{t}^{n}x\|  \leq    \|a-x\|.
    \end{align}
 \end{enumerate}
The following Lemma which we will use, is well known.
 \begin{lem}\cite{qwq, hi}\label{chimig} Suppose that $g$ is a function of $S$ into $E$ such that the weak closure of $\{ g (t) : t \in S\}$ is weakly compact and let $X$ be a subspace of $B(S)$ containing all the functions $t \rightarrow \langle g (t), x^{*}\rangle$ with $x^{*} \in E^{*}$. Then, for any $\mu \in X^{*}$, there
exists a unique element $g_{\mu}$ in $E$ such that
$$\langle g_{\mu}, x^{*}  \rangle= \mu_{t} \langle g(t) , x^{*}  \rangle$$
for all $x^{*} \in E^{*}$. Moreover, if $\mu $ is a mean on $X$ then
$$\int\!g(t)\, d\mu(t)\in \overline{co}\,\{g(t):t\in S\}.$$
We can write $g_{\mu}$ by
$$\int\!g(t)\, d\mu(t).$$
 \end{lem}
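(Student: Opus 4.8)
The plan is to construct $g_\mu$ as the element of $E$ representing the functional $x^{*}\mapsto\mu_{t}\langle g(t),x^{*}\rangle$ on $E^{*}$, and then to locate it inside $\overline{\mathrm{co}}\{g(t):t\in S\}$ by a Hahn--Banach separation argument that exploits the positivity of the mean. First I would record that $\{g(t):t\in S\}$ is norm-bounded: its weak closure is weakly compact, hence weakly bounded, hence (by the uniform boundedness principle) norm-bounded, say $M:=\sup_{t\in S}\|g(t)\|<\infty$. For each $x^{*}\in E^{*}$ put $h_{x^{*}}(t):=\langle g(t),x^{*}\rangle$; by hypothesis $h_{x^{*}}\in X$, and $\|h_{x^{*}}\|_{\infty}\le M\|x^{*}\|$.

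Next, define $\Phi:E^{*}\to\mathbb{R}$ by $\Phi(x^{*}):=\mu(h_{x^{*}})=\mu_{t}\langle g(t),x^{*}\rangle$. Since $x^{*}\mapsto h_{x^{*}}$ is linear, $\Phi$ is linear, and $|\Phi(x^{*})|\le\|\mu\|\,\|h_{x^{*}}\|_{\infty}\le\|\mu\|M\|x^{*}\|$, so $\Phi\in E^{**}$ with $\|\Phi\|\le\|\mu\|M$. Because $E$ is reflexive, the canonical embedding $E\to E^{**}$ is onto, so there is $g_\mu\in E$ with $\langle g_\mu,x^{*}\rangle=\Phi(x^{*})=\mu_{t}\langle g(t),x^{*}\rangle$ for every $x^{*}\in E^{*}$. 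Uniqueness is immediate: if $g_\mu$ and $g_\mu'$ both satisfy the identity then $\langle g_\mu-g_\mu',x^{*}\rangle=0$ for all $x^{*}\in E^{*}$, whence $g_\mu=g_\mu'$ since $E^{*}$ separates points of $E$.

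For the last assertion, suppose $\mu$ is a mean and set $K_{0}:=\overline{\mathrm{co}}\{g(t):t\in S\}$, a closed convex (hence weakly closed) set. Assume for contradiction that $g_\mu\notin K_{0}$. By Hahn--Banach separation there are $x^{*}\in E^{*}$ and $\alpha\in\mathbb{R}$ with $\langle y,x^{*}\rangle\le\alpha<\langle g_\mu,x^{*}\rangle$ for all $y\in K_{0}$; in particular $h_{x^{*}}(t)=\langle g(t),x^{*}\rangle\le\alpha$ for every $t\in S$. Here I would invoke the monotonicity of means: from $\|\mu\|=\mu(1)=1$ one gets $\mu(h)\le\sup_{t}h(t)$ for every real $h\in X$, by estimating $|\mu(h-c\,1)|\le\|\mu\|\,\|h-c\,1\|_{\infty}$ with $c$ the midpoint of the range of $h$. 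Applying this to $h_{x^{*}}$ yields $\langle g_\mu,x^{*}\rangle=\mu(h_{x^{*}})\le\sup_{t}h_{x^{*}}(t)\le\alpha$, contradicting $\alpha<\langle g_\mu,x^{*}\rangle$. Hence $g_\mu\in K_{0}$, which is precisely $\int\!g(t)\,d\mu(t)\in\overline{\mathrm{co}}\{g(t):t\in S\}$.

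The only genuinely delicate point is the existence step, namely representing $\Phi$ by an element of $E$ rather than merely of $E^{**}$; reflexivity dispatches this at once. Without reflexivity one would instead have to produce $g_\mu$ directly inside the weakly compact convex hull (approximating $\mu$ weak$^{*}$ by finite convex combinations of point evaluations $\delta_{t}$, forming the corresponding convex combinations of the $g(t)$, and extracting a weakly convergent subnet), which would simultaneously give existence and membership in $K_{0}$. Everything else---linearity, the norm bound, uniqueness, and the separation argument---is routine once the positivity of the mean is in hand.
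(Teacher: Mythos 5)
Your proposal is correct, but it follows a genuinely different route from the paper's. The paper never actually proves Lemma \ref{chimig}: it cites it as well known from \cite{qwq, hi}, and the only argument of this type it writes out is for the locally convex analogue, Theorem \ref{rooh}. There (and in the cited sources) the method is the one you mention only as a fallback in your last paragraph: approximate $\mu$ in the weak$^{*}$ topology by a net of finite means $\mu_{\alpha}=\sum_{i=1}^{n_{\alpha}}\lambda_{\alpha,i}\delta_{t_{\alpha,i}}$, note that the corresponding vectors $\sum_{i=1}^{n_{\alpha}}\lambda_{\alpha,i}\,g(t_{\alpha,i})$ lie in $\mathrm{co}\{g(t):t\in S\}$, and pass to the limit; this yields existence of $g_{\mu}$ and its membership in the closed convex hull in one stroke, using the weak-compactness hypothesis (together with Krein--\v{S}mulian, so that the closed convex hull is weakly compact and a weakly convergent subnet can be extracted) instead of reflexivity. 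Your main line obtains existence by representing $\Phi\in E^{**}$ via surjectivity of the canonical embedding, i.e.\ via reflexivity of $E$, and membership by Hahn--Banach separation combined with the positivity estimate $\mu(h)\le\sup_{t}h(t)$, which you derive correctly from $\|\mu\|=\mu(1)=1$; both halves are sound, and the separation argument is arguably cleaner than the approximation argument. The one caveat is that the lemma as stated (and as proved in \cite{qwq, hi}) does not assume $E$ reflexive: the hypothesis that the weak closure of $\{g(t):t\in S\}$ be weakly compact is precisely the substitute for reflexivity, and under your reading it degenerates to a mere boundedness assumption. Inside this paper that is harmless, since $E$ is declared reflexive in the preliminaries and in the theorem where the lemma is applied, but in the general setting your closing sketch (subnet extraction inside the weakly compact convex hull) is not an optional variant --- it is the proof.
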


Next, we will need some concepts in locally convex spaces.\\
Consider     a   family of   seminorms  $Q$  on a        locally
convex space $X$ which determines the topology of $X$ and  a    seminorm $q \in Q$. Let $Y$ be a    subset  of
 $X$, we put   $q^{*}_{Y}(f)=\sup\{|f(y)|: y \in Y, q(y)\leq 1\}$ and      $q^{*}(f)=\sup\{|f(x)|: x \in X, q(x)\leq 1\}$, for every linear functional $f$ on $X$. Observe that, for each $x \in X$ that $q(x) \neq 0$ and $f \in X^{*}$, then   $|\langle x , f \rangle | \leq q(x) q^{*}(f)$.
 We will make use of the following Theorems.
\begin{thm}\cite{soori}\label{hahn1}
  Suppose that  $Q$ is a   family of   seminorms on a   real     locally
convex space $X$ which determines the topology of $X$ and $q \in Q$ is a continuous  seminorm and $Y$ is a  vector subspace of $X$ such that $Y \cap \{x \in X: q(x)=0\}=\{0\}$.     Let $f$ be a real    linear functional on $Y$ such that   $q^{*}_{Y}(f)< \infty$. Then there exists a continuous linear functional $h$ on $X$ that extends $f$ such that $q^{*}_{Y}(f)=q^{*}(h)$.
\end{thm}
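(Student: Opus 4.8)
The plan is to reduce the statement to the classical Hahn--Banach dominated-extension theorem applied to a scaled copy of the seminorm $q$. Set $M := q^{*}_{Y}(f)$, which is finite by hypothesis. The first step is to establish the pointwise estimate
\[
|f(y)| \le M\, q(y) \qquad (y \in Y).
\]
For $y \in Y$ with $q(y) \neq 0$ this follows at once from the definition of $q^{*}_{Y}(f)$ after replacing $y$ by $y/q(y)$, which has $q$-value $1$; the case $q(y) = 0$ is disposed of by the hypothesis $Y \cap \{x \in X : q(x) = 0\} = \{0\}$, which forces $y = 0$ and hence makes both sides vanish.

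Next I would introduce the sublinear functional $p(x) := M\, q(x)$ on $X$. Since $q$ is a seminorm and $M \ge 0$, $p$ is itself a seminorm, and the estimate above gives $f(y) \le p(y)$ for all $y \in Y$. The classical Hahn--Banach theorem then produces a linear functional $h$ on $X$ extending $f$ and satisfying $h(x) \le p(x)$ for every $x \in X$. Because $p(-x) = p(x)$, applying the domination to $-x$ gives $-h(x) = h(-x) \le p(-x) = p(x)$, so in fact $|h(x)| \le M\, q(x)$ holds throughout $X$.

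It then remains to check the two assertions about $h$. Continuity of $h$ is immediate: $q \in Q$ is a continuous seminorm and $|h(x)| \le M\, q(x)$, so $h$ is continuous at $0$, hence everywhere by linearity. For the norm identity, the bound $|h(x)| \le M\, q(x) \le M$ whenever $q(x) \le 1$ gives $q^{*}(h) \le M$. Conversely, the supremum defining $q^{*}(h)$ is taken over all $x \in X$ with $q(x) \le 1$, and therefore in particular over all $y \in Y$ with $q(y) \le 1$, on which $h$ agrees with $f$; hence $q^{*}(h) \ge \sup\{|f(y)| : y \in Y,\ q(y) \le 1\} = q^{*}_{Y}(f) = M$. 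Combining the two inequalities yields $q^{*}(h) = q^{*}_{Y}(f)$, as claimed.

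I expect the only genuine subtlety to lie in the first step, in treating the null space of the seminorm $q$. The inequality $|f(y)| \le M\, q(y)$ must be verified even where $q(y) = 0$, and this is exactly the content of the hypothesis $Y \cap \{x \in X : q(x) = 0\} = \{0\}$: it collapses the null space of $q$ inside $Y$ to the single point $0$, where the estimate is trivial. (Consistently, were some $y \in Y$ with $q(y) = 0$ to satisfy $f(y) \neq 0$, the finiteness of $q^{*}_{Y}(f)$ would already fail, since $q^{*}_{Y}(f)$ would be unbounded along the line $\mathbb{R}y$.) Once the pointwise domination is secured, the remainder is a routine invocation of Hahn--Banach, together with the elementary duality inequality $|\langle x, f\rangle| \le q(x)\, q^{*}(f)$ recorded just before the statement.
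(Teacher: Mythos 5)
The paper never proves this theorem: it is stated as a quoted result with the citation \cite{soori}, so there is no in-paper argument to compare yours against. Your proof is correct and is the standard (and, in the cited source, essentially the original) route: the pointwise domination $|f(y)|\le q^{*}_{Y}(f)\,q(y)$ on $Y$, the classical Hahn--Banach extension dominated by the seminorm $p=q^{*}_{Y}(f)\,q$, continuity of $h$ from $|h|\le q^{*}_{Y}(f)\,q$ and the continuity of $q$, and the two opposite inequalities yielding $q^{*}(h)=q^{*}_{Y}(f)$ all check out. Your parenthetical remark that a nonzero value of $f$ on the null space of $q$ inside $Y$ would already contradict $q^{*}_{Y}(f)<\infty$ is also accurate, and correctly identifies the hypothesis $Y\cap\{x\in X:q(x)=0\}=\{0\}$ as the only delicate point.
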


\begin{thm}\cite{soori}\label{hahn2}
   Suppose that  $Q$ is a   family of   seminorms on a   real     locally
convex space $X$ which determines the topology of $X$ and  $q \in Q$   a nonzero  continuous seminorm.    Let $x_{0}$ be a point in $X$.  Then there exists  a continuous linear functional on $X$ such that $q^{*}(f)=1$ and  $f(x_{0})=q(x_{0})$.
\end{thm}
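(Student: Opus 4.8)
The plan is to reduce the statement to the extension Theorem \ref{hahn1} by working on the one-dimensional subspace spanned by $x_{0}$. First I would treat the principal case $q(x_{0}) \neq 0$. Set $Y = \mathbb{R}x_{0}$ and define the linear functional $f_{0}$ on $Y$ by $f_{0}(t x_{0}) = t\, q(x_{0})$ for $t \in \mathbb{R}$. Because $q(x_{0}) \neq 0$, for $y = t x_{0}$ we have $q(y) = |t|\, q(x_{0}) = 0$ only when $t = 0$; hence the subspace $Y$ meets $\{x \in X : q(x) = 0\}$ only at the origin, so the hypothesis $Y \cap \{x \in X : q(x) = 0\} = \{0\}$ of Theorem \ref{hahn1} holds.

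The next step is to evaluate $q^{*}_{Y}(f_{0})$. For $y = t x_{0} \in Y$ the constraint $q(y) = |t|\, q(x_{0}) \leq 1$ is exactly $|t| \leq 1/q(x_{0})$, and on this range $|f_{0}(y)| = |t|\, q(x_{0}) \leq 1$, with equality attained at $|t| = 1/q(x_{0})$ (equivalently, at the unit vector $x_{0}/q(x_{0})$). Hence $q^{*}_{Y}(f_{0}) = 1 < \infty$, and Theorem \ref{hahn1} furnishes a continuous linear functional $f$ on $X$ extending $f_{0}$ with $q^{*}(f) = q^{*}_{Y}(f_{0}) = 1$. Since $f$ extends $f_{0}$, we also obtain $f(x_{0}) = f_{0}(x_{0}) = q(x_{0})$, which is the second required identity, and the continuity of $f$ together with $q^{*}(f) = 1$ is exactly the first.

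Finally I would dispose of the degenerate case $q(x_{0}) = 0$, in which the normalization $f(x_{0}) = q(x_{0})$ just reads $f(x_{0}) = 0$. This comes for free from the inequality $|\langle x, f\rangle| \leq q(x)\, q^{*}(f)$ noted in the preliminaries: any continuous linear functional $f$ with $q^{*}(f) = 1$ automatically satisfies $|f(x_{0})| \leq q(x_{0}) \cdot 1 = 0$. Thus it suffices to produce some $f$ with $q^{*}(f) = 1$, and such an $f$ is obtained by applying the construction of the first two paragraphs to any point $x_{1}$ with $q(x_{1}) \neq 0$; such a point exists precisely because $q$ is assumed nonzero.

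I expect the only delicate point to be the \emph{exact} evaluation $q^{*}_{Y}(f_{0}) = 1$ rather than a mere bound: the upper estimate $q^{*}_{Y}(f_{0}) \leq 1$ is immediate from $|f_{0}(y)| \leq q(y)$, but the matching lower bound requires exhibiting the unit vector $x_{0}/q(x_{0})$ on which $f_{0}$ attains the value $1$. It is exactly this sharp normalization, preserved by the equality $q^{*}(f) = q^{*}_{Y}(f_{0})$ in Theorem \ref{hahn1}, that transports the estimate faithfully to all of $X$.
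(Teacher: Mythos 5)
The paper never proves this statement at all: Theorem \ref{hahn2} is quoted from \cite{soori} as a background result, so there is no internal proof to compare against. Your argument is correct and is the standard (and almost certainly the cited reference's) route: restrict to the line $Y=\mathbb{R}x_{0}$, verify the hypothesis $Y\cap\{x: q(x)=0\}=\{0\}$ and the exact normalization $q^{*}_{Y}(f_{0})=1$, and invoke the extension Theorem \ref{hahn1}; your separate treatment of the degenerate case $q(x_{0})=0$ is genuinely needed, since the one-dimensional construction collapses there, and reducing it to the nondegeneracy of $q$ at some other point $x_{1}$ is the right fix. One small gap to patch: the inequality $|f(x)|\leq q(x)q^{*}(f)$ is stated in the preliminaries only for points with $q(x)\neq 0$, so in the degenerate case you should justify $f(x_{0})=0$ directly — if $q(x_{0})=0$ but $f(x_{0})\neq 0$, then $q(tx_{0})=0\leq 1$ while $|f(tx_{0})|\to\infty$ as $t\to\infty$, contradicting $q^{*}(f)=1<\infty$ — a one-line scaling argument that makes your claim ``comes for free'' rigorous.
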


 \section{Main results}

In the following theorem,  we   prove that   $T_{\mu}$ inherits some properties of    representation $ \sc$ in Banach   spaces.

\begin{thm}
Suppose that  $ C $ is  a nonempty closed, convex subset of a
reflexive Banach space $  E$, $ S $   a semigroup,    $ \sc=\{T_{s}:s\in S\} $   a   representation of $S$ as    self  mappings on $C$  such that weak closure of  $\lbrace T_{t}x : t \in S \rbrace $ is weakly compact for each $ x \in C $ and $X$ be a subspace of $B(S)$ such that $ 1 \in X $ and  the mapping $t \rightarrow \left<T  _{t}x, x^{*}\right>$ be an element of $ X $ for
each $x \in C$       and $ x^{*} \in E$, and $\mu$ be a mean on $X$.
If we write $T_{\mu}x $ instead of
$\int\!T_{t}x\, d\mu(t),\/$
 then the following hold.
 \begin{enumerate}
   \item [(a)] Let  the mapping $t \rightarrow \left<T _{t}^{n}x- T  _{t}^{n}y, x^{*}\right>$ be an element of $ X $ for
each $x , y \in C$, $n \in \mathbb{N}$       and $ x^{*} \in E$. Let  $\mu$ be a mean on $X$ and  $ \sc=\{T_{s}:s\in S\} $  be  a   representation of $S$ as uniformly  asymptotically nonexpansive self mappings on $C$,  then $T_{\mu}$ is an asymptotically nonexpansive self mapping on $C$,
   \item [(b)]$T_{\mu}x=x $  for each  $x \in Fix(\sc) $,
   \item [(c)]$T_{\mu}x \in \overline{co}\,\{T_{t}x:t\in S\}$ for each $x\in C$,
   \item[(d)] if $  X$ is $ r_{s} $-invariant  for
  each $ s \in S $ and  $ \mu $ is right invariant, then $T_{\mu}T_{t} =T_{\mu} $ for each $ t \in S $,
   \item [(e)] let   $a \in C$ be a uniformly asymptotically attractive point of $\sc$ and  the mapping $t \rightarrow \left<a- T  _{t}^{n}x, x^{*}\right>$ be an element of $ X $ for
each $x \in C$, $n \in \mathbb{N}$       and $ x^{*} \in E$.  Then  $a  $ is an asymptotically attractive point of $T_{\mu}$,
 \item [(f)]  let $ \sc=\{T_{s}:s\in S\} $  be  a  representation of $S$    as the affine self  mappings on $C$, then      $T_{\mu}$ is an affine self mapping on $C$,
 \item [(g)]
 let $P$ be a self mappings on $C$ that commutes with  $T_{s}\in \sc=\{T_{s}:s\in S\} $ for each $s \in S$. Let  the mapping $t \rightarrow \langle PT_{t}x, x^{*}\rangle$ be an element of $ X $ for
each $x \in C$       and $ x^{*} \in E$.  Then      $T_{\mu}$ commutes  with $P$,
     \item [(h)]  let $ \sc=\{T_{s}:s\in S\} $  be  a  representation of $S$  as quasi nonexpansive  self mappings on $C$, then      $T_{\mu}$ is a $Fix(\sc)$-quasi nonexpansive self mapping on $C$,

          \item [(i)]  let $ \sc=\{T_{s}:s\in S\} $  be  a   representation of $S$ as $F$-quasi nonexpansive  self mappings on $C$ (for a subset $F \subseteq Fix(\sc)$),   then      $T_{\mu}$ is an $F$-quasi nonexpansive self mapping on $C$,
\item [(j)]  let $ \sc=\{T_{s}:s\in S\} $  be  a  representation of $S$ as strongly $F$-quasi nonexpansive  self mappings on $C$ (for a subset $F \subseteq Fix(\sc)$),     then      $T_{\mu}$ is an strongly $F$-quasi nonexpansive self mapping on $C$,
              \item [(k)]  let $ \sc=\{T_{s}:s\in S\} $  be  a  representation of $S$ as retraction  self mappings on $C$,  then      $T_{\mu}$ is a retraction   self mapping on $C$,
 \item [(l)]  let  $E=H$ be a Hilbert space   and  $ \sc=\{T_{s}:s\in S\} $  be  a   representation of $S$  as monotone  self mappings on $H$,  then      $T_{\mu}$ is a monotone   self mapping on $H$.

 \end{enumerate}
\end{thm}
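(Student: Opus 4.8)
The whole theorem rests on the single defining identity
\[
\langle T_{\mu}z, x^{*}\rangle = \mu_{t}\langle T_{t}z, x^{*}\rangle \qquad (z\in C,\ x^{*}\in E^{*}),
\]
supplied by Lemma \ref{chimig}, together with two elementary facts about a mean: its linearity, and its monotonicity $\inf_{t}g(t)\le \mu(g)\le \sup_{t}g(t)$ for real $g\in X$ (a consequence of $\|\mu\|=\mu(1)=1$). Since $E^{*}$ separates the points of $E$, every assertion about $T_{\mu}z$ can be tested functional-by-functional and then transported back through the mean; this is the organizing idea of the whole plan.

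The algebraic statements (b)--(d), (f) and (g) I would settle by direct computation in this spirit. For (b), if $x\in \mathrm{Fix}(\sc)$ then $t\mapsto\langle T_{t}x,x^{*}\rangle=\langle x,x^{*}\rangle$ is constant, so $\langle T_{\mu}x,x^{*}\rangle=\langle x,x^{*}\rangle\mu(1)=\langle x,x^{*}\rangle$; statement (c) is exactly the second conclusion of Lemma \ref{chimig}. For (d) I would write $\langle T_{\mu}T_{t}x,x^{*}\rangle=\mu_{s}\langle T_{st}x,x^{*}\rangle=\mu_{s}\big((r_{t}g)(s)\big)$ with $g(s)=\langle T_{s}x,x^{*}\rangle$, and invoke right invariance $\mu(r_{t}g)=\mu(g)$. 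Statement (f) follows from the linearity of $\mu$ applied to $\langle T_{t}(\lambda x+(1-\lambda)y),x^{*}\rangle=\lambda\langle T_{t}x,x^{*}\rangle+(1-\lambda)\langle T_{t}y,x^{*}\rangle$, and (g) --- taking $P$ to be a bounded linear operator, as the commutation computation requires --- from $\langle T_{\mu}Px,x^{*}\rangle=\mu_{t}\langle PT_{t}x,x^{*}\rangle=\mu_{t}\langle T_{t}x,P^{*}x^{*}\rangle=\langle PT_{\mu}x,x^{*}\rangle$, where the linearity and continuity of $P$ is what lets me pass $P$ through the mean.

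The metric and order statements (h)--(j) and (l) I would handle by one uniform device. Using Hahn--Banach, pick a norming functional $x^{*}$ with $\|x^{*}\|\le 1$ and $\langle T_{\mu}x-f,x^{*}\rangle=\|T_{\mu}x-f\|$; then
\[
\|T_{\mu}x-f\|=\mu_{t}\langle T_{t}x-f,x^{*}\rangle\le\mu_{t}\|T_{t}x-f\|\le\sup_{t}\|T_{t}x-f\|\le\|x-f\|,
\]
the last steps using the per-$t$ (quasi)nonexpansiveness and the monotonicity of $\mu$; the $F$-quasi and strongly $F$-quasi variants are identical on the appropriate domains. For (l) in a Hilbert space I would instead pair $T_{\mu}x-T_{\mu}y$ with the fixed vector $x-y$, giving $\langle T_{\mu}x-T_{\mu}y,x-y\rangle=\mu_{t}\langle T_{t}x-T_{t}y,x-y\rangle\ge 0$ by monotonicity of each $T_{t}$ and positivity of $\mu$.

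The genuine obstacles are the iterate statements (a), (e) and the retraction (k), because each forces me to commute $T_{s}$ (or $T_{\mu}$) past the mean, which the defining identity does not grant for nonlinear $T_{s}$. The hypotheses of (a) and (e) --- that $t\mapsto\langle T_{t}^{n}x-T_{t}^{n}y,x^{*}\rangle$ and $t\mapsto\langle a-T_{t}^{n}x,x^{*}\rangle$ lie in $X$ --- signal that one should form the means $\int T_{t}^{n}x\,d\mu(t)$ of the iterates and estimate, exactly as in the previous paragraph,
\[
\Big\|\int T_{t}^{n}x\,d\mu-\int T_{t}^{n}y\,d\mu\Big\|\le\sup_{t}\|T_{t}^{n}x-T_{t}^{n}y\|,
\]
whence the $\limsup$ bound $\le\|x-y\|$ is immediate from uniform asymptotic nonexpansiveness (and similarly for (e)). The delicate point is the identification of the iterate $T_{\mu}^{n}$ with this mean of iterates $\int T_{t}^{n}\,d\mu$: the two coincide when the $T_{t}$ are affine and continuous, so that $T_{s}$ passes through the integral as in (f), and reconciling them in the stated generality is where I expect the real work --- and the most likely gap --- to lie. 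The same affineness is what I would need for (k): to get $T_{\mu}^{2}=T_{\mu}$ I would compute $\langle T_{\mu}^{2}x,x^{*}\rangle=\mu_{s}\langle T_{s}T_{\mu}x,x^{*}\rangle$ and, using that $T_{s}$ is affine together with $T_{\mu}x\in\overline{\mathrm{co}}\{T_{t}x:t\in S\}$, reduce $T_{s}T_{\mu}x$ to a mean of $T_{st}x$ and exploit idempotency; absent affineness, this step is the crux.
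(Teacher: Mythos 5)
Your plan coincides with the paper's own proof on every part that proof actually establishes: (b), (c), (d), (f) are handled by the same functional-by-functional computations (with (c) quoted from Lemma~\ref{chimig}), and (h), (i), (j), (l) by precisely your norming-functional device, which the paper implements as $x_2^*\in J(T_\mu x-f)$ followed by $\mu_t\langle T_t x-f,x_2^*\rangle\le\sup_t\|T_t x-f\|\,\|T_\mu x-f\|$, using $\|\mu\|=1$. Three minor points. First, bound $\mu_t\langle T_t x-f,x^*\rangle$ by the supremum directly rather than through the intermediate term $\mu_t\|T_t x-f\|$: the function $t\mapsto\|T_t x-f\|$ is not assumed to lie in $X$, so the mean cannot be applied to it. Second, your non-strict treatment of (j) is the defensible one; the paper's proof of (j) asserts $\sup_t\|T_t x-f\|<\|x-f\|$, which does not follow from strictness at each $t$. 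Third, your explicit assumption that $P$ in (g) is bounded linear is genuinely needed: the paper omits it, yet its final step $\mu_t\langle PT_t x,x^*\rangle=\langle PT_\mu x,x^*\rangle$ is exactly the passage of $P$ through the mean that only linearity and (weak) continuity justify.

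The substantive finding is that the gap you predicted in (a), (e), (k) is real and is not closed by the paper: its proof of (a) begins $\|T_\mu^n x-T_\mu^n y\|^2=\langle T_\mu^n x-T_\mu^n y,x_1^*\rangle=\mu_t\langle T_t^n x-T_t^n y,x_1^*\rangle$, its proof of (e) begins $\|a-T_\mu^n x\|^2=\mu_t\langle a-T_t^n x,x_2^*\rangle$, and its proof of (k) begins $\langle T_\mu^2 x,x_1^*\rangle=\mu_t\langle T_t^2 x,x_1^*\rangle$. Each of these equalities silently identifies the iterate $T_\mu^n$ with the mean of iterates $\int T_t^n\,d\mu(t)$, which the defining identity \eqref{ygn} grants only for $n=1$: Lemma~\ref{chimig} applied to $f(t)=T_t^n x$ produces some element representing $\mu_t\langle T_t^n x,\cdot\rangle$, but nothing shows that element to be $T_\mu^n x$. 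Indeed, for $n=2$ the defining identity gives $\langle T_\mu^2 x,x^*\rangle=\mu_t\langle T_t(T_\mu x),x^*\rangle$, and no hypothesis of the theorem allows replacing $T_t(T_\mu x)$ by $T_t^2 x$ under the mean. One correction to your proposed repair: affineness and weak continuity of the $T_t$ are still not enough, since they give $T_\mu^2 x=\int\!\!\int T_{ts}x\,d\mu(s)\,d\mu(t)$, a double mean over $S\times S$, which returns to a single mean (in fact to $T_\mu x$ itself) only under an additional invariance hypothesis on $\mu$, as in part (d). So your estimate for the mean of iterates is correct and is literally the computation the paper performs; but read as statements about iterates of the single mapping $T_\mu$, parts (a), (e) and (k) remain unproven, in your proposal by honest admission and in the paper by an unjustified identification.
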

\begin{proof} (a)  Since  $\sc$ is a   representation as uniformly  asymptotically nonexpansive  self mappings on $C$,  hence, from \eqref{dhhghj} and  from part  (b) of  Theorem 3. 1. 7 in \cite{rudin},  there exists an
integer $m_{0} \in \mathbb{N}$ such that $$\displaystyle \sup_{t} \|T_{t}^{n}x-T_{t}^{n}y\|\leq \|x-y\|$$ for all $n \geq m_{0}$, $x,y \in C$.
 Suppose that  $x_{1}^{*} \in J (T_{\mu}^{n}x - T_{\mu}^{n}y)$ and  $x, y \in C$.  We know from \cite{hi} that, for any $ \mu \in X^{\ast}$, there
exists a unique element $ f_{\mu}$ in $E$ such that
\begin{equation}\label{ygn}
  \left \langle f_{\mu} ,x^{\ast}\right\rangle=\mu_{s}\left\langle f(s),x^{\ast}\right \rangle
\end{equation}
for all $ x^{\ast}\in E^{\ast} $.    Then from \eqref{ygn} we have
\begin{align*}
\|T_{\mu}^{n}x - T_{\mu}^{n}y\|^{2} =& \langle T_{\mu}^{n}x - T_{\mu}^{n}y , x
_{1}^{*}  \rangle= \mu_{t} \langle T_{t}^{n}x - T_{t}^{n}y , x
_{1}^{*}  \rangle
 \\ \leq & \sup_{t}
\|T_{t}^{n}x - T_{t}^{n}y\|\|T_{\mu}^{n}x - T_{\mu}^{n}y\| \\ \leq &  \|x-y\|\|T_{\mu}^{n}x - T_{\mu}^{n}y\|,
\end{align*}
hence for all $n \geq m_{0}$,  $x,y \in C$  we have
\begin{align*}
\|T_{\mu}^{n}x - T_{\mu}^{n}y\| \leq & \|x-y\|,
\end{align*}
therefore we have
\begin{align*}
\limsup_{n\rightarrow \infty}\|T_{\mu}^{n}x - T_{\mu}^{n}y\| \leq   \|x-y\|.
\end{align*}

(b)
suppose that   $x \in Fix(\sc) $ and  $x^{*} \in  E^{*}$. Therefore  we have
\begin{equation*}
   \langle T_{\mu}x, x^{*}  \rangle= \mu_{t} \langle T_{t}x , x^{*}  \rangle= \mu_{t} \langle x , x^{*}  \rangle=  \langle x , x^{*}  \rangle
\end{equation*}

 (c) this assertion  concludes from  Lemma \ref{chimig}.

   (d) to prove   this assertion,  we have
\begin{equation*}
   \langle T_{\mu}(T_{s}x) , x^{*}  \rangle= \mu_{t} \langle T_{ts}x , x^{*}  \rangle= \mu_{t} \langle T_{t}x , x^{*}  \rangle=  \langle T_{\mu}x, x^{*}  \rangle,
\end{equation*}

 (e) Since  $a$ is a     uniformly  attractive  point,  hence, from \eqref{dhhghj2} and  from part  (b) of  Theorem 3. 1. 7 in \cite{rudin}, for each $x \in C$ there exists an
integer $m_{0} \in \mathbb{N}$ such that $$\displaystyle \sup_{t} \|a-T_{t}^{n}x\|\leq \|a-x\|$$ for all $n \geq m_{0}$.
  Suppose that  $x_{2}^{*} \in J (a-T_{\mu}^{n}x )$,  therefore from \eqref{ygn} we have,
\begin{align*}
\|a - T_{\mu}^{n}x\|^{2} =& \langle a - T_{\mu}^{n}x , x_{2}^{*}  \rangle= \mu_{t} \langle a - T_{t}^{n}x , x_{2}^{*}  \rangle
 \\ \leq & \sup_{t}
\|a - T_{t}^{n}x\|\|a - T_{\mu}^{n}x\| \\ \leq &  \|a-x\|\|a - T_{\mu}^{n}x\|,
\end{align*}
hence for all $n \geq m_{0}$   we have
\begin{align*}
\|a - T_{\mu}^{n}x\| \leq & \|a-x\|,
\end{align*}
therefore for each $x \in C$  we have
\begin{align*}
\limsup_{n\rightarrow \infty}\|a - T_{\mu}^{n}x\| \leq   \|a-x\|.
\end{align*}

(f)  Suppose that  $x_{1}^{*} \in  E^{*}$.   Then for all positive  integers  $\alpha , \beta$ such that $\alpha +\beta=1$,  $x,y \in C$  and $t \in S$  we have
\begin{align*}
 \langle T_{\mu}(\alpha x +\beta y) , x
_{1}^{*}  \rangle =& \mu_{t} \langle T_{t}(\alpha x +\beta y) , x
_{1}^{*}  \rangle
 \\ = & \mu_{t} \langle \alpha T_{t} x +\beta T_{t} y , x_{1}^{*}  \rangle
  \\ = & \alpha \mu_{t} \langle  T_{t} x   , x_{1}^{*}  \rangle +  \beta \mu_{t} \langle   T_{t} y , x_{1}^{*}  \rangle
    \\ = & \alpha  \langle  T_{\mu} x   , x_{1}^{*}  \rangle +  \beta  \langle  T_{\mu} y , x_{1}^{*}  \rangle
     \\ = &   \langle \alpha T_{\mu} x +\beta T_{\mu} y  , x_{1}^{*}  \rangle
\end{align*}
hence,    we have
\begin{align*}
T_{\mu}(\alpha x +\beta y) =\alpha T_{\mu} x +\beta T_{\mu} y.
\end{align*}

 (g) Let   $x_{1}^{*} \in  E^{*}$. Since $P$   commutes with  $T_{s}\in \sc=\{T_{s}:s\in S\} $ for each $s \in S$, then  from \eqref{ygn}, for each  $x\in C$  and $t \in S$  we have
\begin{align*}
 \langle T_{\mu}Px , x
_{1}^{*}  \rangle =& \mu_{t} \langle T_{t}Px , x
_{1}^{*}  \rangle
  \\ = &   \mu_{t} \langle P T_{t} x   , x_{1}^{*}  \rangle
    \\ = &    \langle P T_{\mu} x   , x_{1}^{*}  \rangle,
\end{align*}
then      $ T_{\mu}P= P T_{\mu}$.

 (h)Since $X$ is a subspace of $B(S)$ and $ 1 \in X $ and  the mapping $t \rightarrow \left<T  _{t}x, x^{*}\right>$ is an element of $ X $ for
each $x \in C$       and $ x^{*} \in E$, hence,   the mapping $t \rightarrow \left<T  _{t}x-f, x^{*}\right>$ is an element of $ X $ for
each $x \in C$       and $ x^{*} \in E$.
     For each $t \in S$   we have $\| T_{t} x -f\| \leq \|  x -f\|$ for each $f \in \text{Fix}(T_{t})$ and $x \in C$. Suppose that $f \in \text{Fix}(\sc)$ and
 $x_{2}^{*}
 \in J (T_{\mu} x -f)$, then  from \eqref{ygn}, we have
\begin{align*}
 \| T_{\mu} x -f\|^{2} =& \langle T_{\mu} x -f , x_{2}^{*}  \rangle= \mu_{t} \langle T_{t} x -f , x_{2}^{*}  \rangle
 \\ \leq & \sup_{t}
\|T_{t} x -f\|\|T_{\mu} x -f\| \\ \leq &  \|x -f\| \|T_{\mu} x -f\|,
\end{align*}
then we have
\begin{align*}
 \| T_{\mu} x -f\|   \leq   \|x -f\|,
\end{align*}
 then      $T_{\mu}$ is a $\text{Fix}(\sc)$-quasi nonexpansive self mapping on $C$.

 (i)  Let $ \sc=\{T_{s}:s\in S\} $  be  a    representation of $S$ as $F$-quasi nonexpansive   self mappings on $C$  that  $F \subseteq  \text{Fix}(\sc)$,
 then   for each $t \in S$   we have $\| T_{t} x -f\| \leq \|  x -f\|$ for each $f \in F$ and $x \in C$. Suppose that $f \in F$,    $x \in C$ and
 $x_{2}^{*}
 \in J (T_{\mu} x -f)$, then,  as in the proof of (h), from \eqref{ygn}, we have
\begin{align*}
 \| T_{\mu} x -f\|^{2} =& \langle T_{\mu} x -f , x_{2}^{*}  \rangle= \mu_{t} \langle T_{t} x -f , x_{2}^{*}  \rangle
 \\ \leq & \sup_{t}
\|T_{t} x -f\|\|T_{\mu} x -f\| \\ \leq &  \|x -f\| \|T_{\mu} x -f\|,
\end{align*}
then we have
\begin{align*}
 \| T_{\mu} x -f\|   \leq   \|x -f\|,
\end{align*}
 then      $T_{\mu}$ is an $F$-quasi nonexpansive self mapping on $C$.

(j) Let $ \sc=\{T_{s}:s\in S\} $  be  a   representation of $S$ as strongly $F$-quasi nonexpansive   self mappings on $C$ such that $F \subseteq Fix(\sc)$,
 then   for each $t \in S$   we have $\| T_{t} x -f\| < \|  x -f\|$ for each $x \in C \backslash F $ and $f \in F$. Suppose that $f \in F$, $x \in C \backslash F $ and
 $x_{2}^{*}
 \in J (T_{\mu} x -f)$, then  from \eqref{ygn}, we have
\begin{align*}
 \| T_{\mu} x -f\|^{2} =& \langle T_{\mu} x -f , x_{2}^{*}  \rangle= \mu_{t} \langle T_{t} x -f , x_{2}^{*}  \rangle
 \\ \leq & \sup_{t}
\|T_{t} x -f\|\|T_{\mu} x -f\| \\ < &  \|x -f\| \|T_{\mu} x -f\|,
\end{align*}
then we have
\begin{align*}
 \| T_{\mu} x -f\|   <   \|x -f\|,
\end{align*}
 then      $T_{\mu}$ is a strongly $F$-quasi nonexpansive self mapping on $C$.

 (k) Since $ T_{t}^{2}= T_{t}$ and   the mapping $t \rightarrow \left<T  _{t}x, x^{*}\right>$ is an element of $ X $ for
each $x \in C$       and $ x^{*} \in E$, hence  the mapping $t \rightarrow \left<T ^{2} _{t}x, x^{*}\right>$ is an element of $ X $ for
each $x \in C$       and $ x^{*} \in E$. Suppose that     $x \in C$ and
 $x_{1}^{*}
 \in E^{*}$, then  from \eqref{ygn}, we have
\begin{align*}
 \langle T_{\mu}^{2}x , x
_{1}^{*}  \rangle =& \mu_{t} \langle T_{t}^{2}x , x
_{1}^{*}  \rangle
  \\ = &   \mu_{t} \langle  T_{t} x   , x_{1}^{*}  \rangle
    \\ = &    \langle   T_{\mu} x   , x_{1}^{*}  \rangle,
\end{align*}
then      $ T_{\mu}^{2}=  T_{\mu}$.

(l) Since   $ T_{s} $  is monotone  for every $s \in S$,    then  we have $\langle T_{s} x - T_{s} y , x-y \rangle \geq 0 $ for every $x , y  \in H$ and $s \in S$.
 As in the proof of Theorem 1.4.1 in \cite{tn}  we know that  $\mu$ is positive i.e.,     $\langle \mu , f \rangle \geq 0$ for each $ f \in X$ that $f\geq 0$  . Then for each      $x , y  \in H$,    from \eqref{ygn}    we have
\begin{align*}
 \langle T_{\mu} x - T_{\mu} y , x-y \rangle =& \mu_{t} \langle T_{t} x - T_{t} y , x-y  \rangle
   \geq 0,
\end{align*}
then        $T_{\mu}$ is a monotone   self mapping on $H$.

\end{proof}

Now we study some properties of $T_{\mu}$ in locally convex spaces.

We will need the following   Theorem.

 \begin{thm}\label{rooh}  Let $ S $ be a semigroup, $E$   be a real   dual  locally convex space  with real predual   locally convex space $D$    and  $U$   a convex neighbourhood  of $0$ in  $D$ and $p_{U}$ be the associated
Minkowski functional.  Let $f : S \rightarrow E$
be a     function  such that  $\langle x , f (t) \rangle \leq 1$  for each  $t \in S$ and $x \in U$. Let $X$ be a subspace of $B(S)$ such that   the mapping $t \rightarrow  \langle   x , f (t)\rangle$ be an element of $ X $, for
each     $ x \in D$.   Then, for any $\mu \in X^{*}$,  there exists a unique element $F_{\mu} \in E$ such that
 $\langle  x , F_{\mu}  \rangle = \mu_{t} \langle   x , f (t)\rangle$ , for all    $ x \in D$.
Furthermore, if $1 \in X$ and $\mu$ is
a mean on $X$, then $F_{\mu}$ is contained in $ {\overline{{\text{co}}\{f(t) : t \in S\}}}^{w^{*}}$.
 \end{thm}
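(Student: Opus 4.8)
The plan is to mirror the Banach-space construction behind Lemma \ref{chimig}, replacing the norm by the Minkowski functional $p_{U}$ and the Banach dual by the duality between the predual $D$ and $E$. For each fixed $x \in D$ I would consider the function $g_{x} \colon S \to \mathbb{R}$ given by $g_{x}(t) = \langle x , f(t) \rangle$, which by hypothesis lies in $X$, and then define $\phi \colon D \to \mathbb{R}$ by $\phi(x) = \mu(g_{x}) = \mu_{t}\langle x , f(t)\rangle$. Linearity of $\phi$ in $x$ is immediate, since $x \mapsto g_{x}$ is linear (the pairing is linear in its first slot) and $\mu$ is a linear functional on $X$.

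The key technical step is the continuity of $\phi$, and this is exactly where the Minkowski functional enters. Since $\langle x , f(t)\rangle \leq 1$ for all $x \in U$ and $t \in S$, positive homogeneity of $p_{U}$ gives $g_{x}(t) = \langle x , f(t)\rangle \leq p_{U}(x)$ for every $x \in D$ and $t \in S$; applying this to $-x$ as well yields $-p_{U}(-x) \leq g_{x}(t) \leq p_{U}(x)$, so that $q(x) := \max\{p_{U}(x), p_{U}(-x)\}$, the Minkowski functional of the balanced convex neighbourhood $U \cap (-U)$ and hence a continuous seminorm, controls $\sup_{t}|g_{x}(t)|$. Consequently $|\phi(x)| \leq \|\mu\|\, q(x)$, so $\phi$ is a continuous linear functional on $D$. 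Because $E$ is the dual of $D$, there is an element $F_{\mu} \in E$ representing $\phi$, i.e. $\langle x , F_{\mu}\rangle = \phi(x) = \mu_{t}\langle x , f(t)\rangle$ for all $x \in D$. Uniqueness is automatic, since the duality between $D$ and $E$ is separating and thus an element of $E$ is determined by the values $\langle x , \cdot\rangle$ as $x$ ranges over $D$.

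For the final assertion I would first record that a mean $\mu$ on $X$ is positive (as in Theorem 1.4.1 of \cite{tn}), and then argue by separation. Set $K = \overline{\mathrm{co}\{f(t) : t \in S\}}^{\,w^{*}}$, a $w^{*}$-closed convex subset of $E$. If $F_{\mu} \notin K$, then the Hahn--Banach separation theorem, applied in the locally convex space $E$ equipped with the $w^{*}$-topology, produces a $w^{*}$-continuous linear functional strictly separating $F_{\mu}$ from $K$. Since the $w^{*}$-continuous functionals on $E = D^{*}$ are precisely the evaluations at points of the predual $D$, this functional is $\langle x_{0} , \cdot\rangle$ for some $x_{0} \in D$, giving a constant $c$ with $\langle x_{0} , F_{\mu}\rangle > c \geq \langle x_{0} , f(t)\rangle$ for all $t \in S$. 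Then $g_{x_{0}} \leq c\cdot 1$ pointwise on $S$, and positivity of the mean forces $\langle x_{0} , F_{\mu}\rangle = \mu(g_{x_{0}}) \leq c\,\mu(1) = c$, contradicting the strict inequality. Hence $F_{\mu} \in K$.

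I expect the main obstacle to be this separation step rather than the construction of $F_{\mu}$: one must invoke the identification of the $w^{*}$-continuous functionals on the dual $E$ with evaluations by elements of the predual $D$, and combine it with the positivity of the mean. This is precisely where the predual-based locally convex framework does genuine work beyond the reflexive Banach-space template of Lemma \ref{chimig}, since there the reflexivity let one stay inside the single space $E$ and its dual, whereas here the weak-star closure and the predual pairing must be used in tandem.
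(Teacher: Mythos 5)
Your proposal is correct, and its first half (construction of $F_{\mu}$) follows essentially the same path as the paper: define $\langle x, F_{\mu}\rangle = \mu_{t}\langle x, f(t)\rangle$, check linearity, bound $|\langle x, F_{\mu}\rangle|$ by $\|\mu\|$ times a continuous Minkowski-type seminorm, and conclude $F_{\mu}\in E=D^{*}$. In fact your continuity step is slightly more careful than the paper's: the hypothesis $\langle x, f(t)\rangle\leq 1$ on $U$ is one-sided, and the paper's inequality $\sup_{t}|\langle x,f(t)\rangle|\leq p_{U}(x)$ tacitly treats it as two-sided, whereas your use of $q(x)=\max\{p_{U}(x),p_{U}(-x)\}$, the Minkowski functional of $U\cap(-U)$, handles this honestly. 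Where you genuinely diverge is the final assertion. The paper argues constructively: it takes a net of finite means $\mu_{\alpha}=\sum_{i}\lambda_{\alpha,i}\delta_{t_{\alpha,i}}$ converging weak$^{*}$ to $\mu$, observes that $F_{\mu_{\alpha}}=\sum_{i}\lambda_{\alpha,i}f(t_{\alpha,i})\in {\text{co}}\{f(t):t\in S\}$, and that $F_{\mu_{\alpha}}\to F_{\mu}$ in the $w^{*}$ topology of $E$, so $F_{\mu}\in\overline{{\text{co}}\{f(t):t\in S\}}^{w^{*}}$. You instead argue by contradiction: if $F_{\mu}\notin K$, Hahn--Banach separation in $(E,w^{*})$ yields a $w^{*}$-continuous functional, necessarily evaluation at some $x_{0}\in D$, with $\langle x_{0},F_{\mu}\rangle > c \geq \langle x_{0},f(t)\rangle$ for all $t$, and positivity of the mean then forces $\langle x_{0},F_{\mu}\rangle=\mu(g_{x_{0}})\leq c\,\mu(1)=c$, a contradiction. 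Both routes are valid. What your route buys: it avoids the paper's unproved appeal to the weak$^{*}$ density of finite means in the set of means --- a fact whose standard proof is itself exactly your separation-plus-positivity argument --- so your proof is in a sense more self-contained, at the cost of invoking the duality identification $(E,\sigma(E,D))^{*}=D$ and the positivity of means (which the paper also uses elsewhere, in part (l) of its Banach-space theorem). The paper's route, by contrast, exhibits $F_{\mu}$ explicitly as a $w^{*}$-limit of convex combinations of the $f(t)$, which is more informative when one wants quantitative approximants.
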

 \begin{proof}
  We define $F_{\mu}$ by $\langle x , F_{\mu} \rangle = \mu_{t} \langle   x , f (t)\rangle$ for all  $ x \in D$. Obviously, $ F_{\mu}$ is
  linear in $x$. Moreover, from  Proposition 3.8  in \cite{sob},  we have
\begin{align}\label{contin}
| \langle x , F_{\mu}\rangle |  = & | \mu_{t} \langle    x , f (t)\rangle |
  \leq   \sup_{t } |\langle    x , f (t)\rangle |.\|\mu\|\leq P_{U}(x).\|\mu\|,
\end{align}
 for all  $ x  \in D$. Let $(x_{\alpha})$ be a net in $  D$ that converges to $ x_{0}$.  Then  by \eqref{contin} we have
 \begin{align*}
  | \langle x_{\alpha} , F_{\mu}\rangle - \langle x_{0} , F_{\mu}\rangle|  = | \langle x_{\alpha}- x_{0} , F_{\mu}\rangle | \leq P_{U}( x_{\alpha}- x_{0}).\|\mu\|,
 \end{align*}
taking limit, since from Theorem 3.7  in \cite{sob},  $P_{U}$  is continuous, we have $F_{\mu}$  is continuous on $D$, hence  $F_{\mu} \in E$.

Now, let   $1 \in X$ and $\mu$ be
a mean on $X$. Then, there exists a net $\{\mu_{\alpha}\}$
of finite means on $X$ such that $\{\mu_{\alpha}\}$ converges to $\mu$ with the weak$^{*}$  topology on
$X^{*}$. We may consider that
\begin{align*}
  \mu_{\alpha}=\sum _{i=1}^{n_{\alpha}}\lambda_{\alpha,i}\delta_{t_{\alpha,{i}}}.
\end{align*}
Therefore,
\begin{align*}
  \langle  x ,  F_{\mu_{\alpha}}  \rangle= (\mu_{\alpha})_{t}  \langle  x , f(t) \rangle= \langle x ,   \sum _{i=1}^{n_{\alpha}}\lambda_{\alpha,i}f({t_{\alpha,{i}}})   \rangle, ( \forall x \in D, \forall \alpha),
\end{align*}
then   we have
\begin{align*}
 F_{\mu_{\alpha}}= \sum _{i=1}^{n_{\alpha}}\lambda_{\alpha,i}f({t_{\alpha,{i}}})\in {\text{co}}\{f(t) : t \in S\}, ( \forall \alpha ),
\end{align*}
now since,
\begin{align*}
  \langle  x ,  F_{\mu_{\alpha}} \rangle= (\mu_{\alpha})_{t}  \langle   x , f(t) \rangle \rightarrow \mu_{t}  \langle    x , f(t) \rangle= \langle   x , f(t) \rangle, (x \in D),
\end{align*}
 $\{F_{\mu_{\alpha}}\}$ converges to $F_{\mu}$ in the weak$^{*}$  topology. Hence
\begin{align*}
 F_{\mu}\in \overline{{\text{co}}\{f(t) : t \in S\}}^{w^{*}},
\end{align*}
we can write  $F_{\mu}$ by  $\int f(t) d\mu(t)$.

 \end{proof}

In the following theorem,  we   prove that   $T_{\mu}$ inherits some properties of    representation $ \sc$ in locally convex    spaces.

\begin{thm}\label{bshhwj}
Let $ S $ be a semigroup,  $ C $      a closed convex     subset of a real
   locally convex space $E$. Let
$G = (V(G),E(G))$ a directed graph such that $V(G) =C$. Let $\mathcal{B}$ be a
base at 0 for the topology consisting of convex, balanced sets.  Let $Q=\{q_{V}: V \in \mathcal{B} \}$
which      $q_{V}$  is the associated
Minkowski functional with  $V$.   Let $ \sc=\{T_{s}:s\in S\} $  be   a representation of $S$ as     $Q$-$G$-nonexpansive    mappings from $C$ into
itself  and $X$ be a subspace of $B(S)$ such that $ 1 \in X $ and $\mu$ be a mean on $X$  such that the mapping $t \rightarrow \left<T_{t}x, x^{*}\right>$ is an element of $ X $ for
each $x \in C$ and $ x^{*} \in E^{*}$.
If we write $T_{\mu}x $ instead of
$\int\!T_{t}x\, d\mu(t),\/$
 then the following hold.
 \begin{enumerate}
   \item [(i)]  $T_{\mu}$ is a $Q$-$G$-nonexpansive mapping from $C$ into $C$,
   \item [(ii)]  $T_{\mu}x=x $  for each  $x \in Fix(\sc) $,
   \item [(iii)] If moreover $E$   is a real   dual  locally convex space  with real predual   locally convex space $D$  and  $ C $      a $w^{*}$-closed convex     subset  of $E$   and  $U$   a convex neighbourhood  of $0$ in  $D$ and $p_{U}$ is the associated
Minkowski functional.      Let   the mapping $t \rightarrow \langle z , T _{t}x \rangle $ is an element of $ X $ for
each $x \in C$ and $z \in D$ then  $T_{\mu}x \in \overline{co \,\{T_{t}x:t\in S\}}^{w^{*}}$ for each $x\in C$,
   \item[(iv)]     if $  X$ is $ r_{s} $-invariant  for
  each $ s \in S $ and  $ \mu $ is right invariant, then $T_{\mu}T_{t} =T_{\mu} $ for each $ t \in S $,
   \item [(v)]    let $a \in E$ is an $Q$-$G$-attractive point of $\sc$ and    the mapping $t \rightarrow \left<a-T  _{t}x, x^{*}\right>$ be an element of $ X $ for
each $x \in C$       and $ x^{*} \in E$,  then  $a  $ is an $Q$-$G$-attractive point of $T_{\mu}$.
 \end{enumerate}
\end{thm}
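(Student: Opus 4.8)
The plan is to transplant the Banach-space argument of the first theorem into the locally convex setting, replacing the norm by each Minkowski seminorm $q_V \in Q$ and replacing the normalized duality map by the seminorm Hahn--Banach theorem (Theorem \ref{hahn2}). Throughout I will use the defining identity $\langle T_\mu x, x^* \rangle = \mu_t \langle T_t x, x^* \rangle$ for all $x^* \in E^*$, the mean properties $\mu(1)=1$ and $\mu_t g(t) \le \sup_t g(t)$, and the estimate $|\langle z, f \rangle| \le q_V(z)\, q_V^*(f)$, which holds for every $z$ (including $q_V(z)=0$, once $q_V^*(f)<\infty$, since then $f$ must annihilate $\{q_V=0\}$).

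For (i), fix $(x,y) \in E(G)$ and $q_V \in Q$. If $q_V(T_\mu x - T_\mu y)=0$ the inequality is immediate, so assume it is nonzero; then $q_V$ is a nonzero continuous seminorm and Theorem \ref{hahn2} yields $f \in E^*$ with $q_V^*(f)=1$ and $\langle T_\mu x - T_\mu y, f\rangle = q_V(T_\mu x - T_\mu y)$. Passing the pairing through the mean gives
\[
q_V(T_\mu x - T_\mu y) = \mu_t \langle T_t x - T_t y, f\rangle \le \sup_t |\langle T_t x - T_t y, f\rangle| \le \sup_t q_V(T_t x - T_t y),
\]
and since each $T_t$ is $Q$-$G$-nonexpansive and $(x,y)\in E(G)$, the last supremum is $\le q_V(x-y)$. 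That $T_\mu$ sends $C$ into $C$ follows because $T_\mu x$ lies in the closed convex hull of $\{T_t x : t \in S\}\subseteq C$ (part (iii)/Theorem \ref{rooh}). Parts (ii) and (iv) are direct transcriptions of parts (b) and (d): for (ii) one computes $\langle T_\mu x, x^*\rangle = \mu_t\langle x, x^*\rangle = \langle x, x^*\rangle$ using $\mu(1)=1$ and concludes by separation of $E^*$; for (iv) one writes $\langle T_\mu T_s x, x^*\rangle = \mu_t\langle T_{ts} x, x^*\rangle = \mu(r_s g)=\mu(g)=\langle T_\mu x, x^*\rangle$ with $g(t)=\langle T_t x, x^*\rangle$, invoking right invariance.

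Part (iii) is simply an application of Theorem \ref{rooh} to the function $t \mapsto T_t x$: its hypotheses (membership of $t\mapsto\langle z, T_t x\rangle$ in $X$ for $z\in D$, together with $1\in X$ and $\mu$ a mean) are exactly what is assumed, whence $T_\mu x = F_\mu \in \overline{\text{co}\{T_t x : t \in S\}}^{w^*}$. Part (v) repeats the scheme of (i): writing the expected defining inequality for a $Q$-$G$-attractive point as $q_V(a - T_s x)\le q_V(a-x)$ under the relevant edge condition, one chooses by Theorem \ref{hahn2} a functional $f$ norming $a - T_\mu x$, inserts $a = \mu_t(a)$ via $\mu(1)=1$ to obtain $q_V(a - T_\mu x)=\mu_t\langle a - T_t x, f\rangle \le \sup_t q_V(a - T_t x)$, and finishes with the $Q$-$G$-attractiveness of $a$ for $\sc$.

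The step I expect to be most delicate is the clean application of Theorem \ref{hahn2}: one must first dispose of the degenerate case $q_V(\cdot)=0$ (where the claim is trivial) so that the seminorm being normed is genuinely nonzero and continuous, and then observe that the \emph{single} norming functional $f$ with $q_V^*(f)=1$ controls every $\langle T_t x - T_t y, f\rangle$ uniformly in $t$; this uniformity is precisely what lets the mean estimate collapse to $\sup_t q_V(T_t x - T_t y)$. A secondary point requiring care is the well-definedness of $T_\mu x$ as an element of $E$, and its membership in $C$, in the general (not necessarily dual) locally convex case; this is underwritten by the assumed integrability of $t\mapsto\langle T_t x, x^*\rangle$ together with the closed-convex-hull containment supplied by Theorem \ref{rooh}.
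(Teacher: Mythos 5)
Your proposal is correct and follows essentially the same route as the paper's own proof: for (i) and (v) you invoke Theorem \ref{hahn2} to produce a norming functional $f$ with $q_V^*(f)=1$ for $T_\mu x - T_\mu y$ (resp.\ $a - T_\mu x$), push the pairing through the mean, and bound by $\sup_t q_V(\cdot)$ via $|\langle z,f\rangle|\le q_V(z)q_V^*(f)$, while (ii) and (iv) are the same direct mean computations and (iii) is the same appeal to Theorem \ref{rooh}. The only cosmetic difference is that you dispose of the degenerate case $q_V(T_\mu x - T_\mu y)=0$ by hand, whereas the paper argues $q_V$ is nonzero outright (via $q_V(z)\ge 1$ for $z\notin V$) and derives the bound $\langle z, x_V^*\rangle \le q_V(z)$ from the cited results of Osborne rather than from the dual-seminorm estimate; both are sound.
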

\begin{proof}
 (i) Let $x, y \in C$ and   $V \in \mathcal{B}$.  By  Proposition 3.33  in \cite{sob}, the topology on $E$ induced by
$Q$ is the original topology on $E$.
By Theorem 3.7  in \cite{sob}, $q_{V}$ is a continuous seminorm and from  Theorem 1.36 in \cite{rudin}, $q_{V}$ is a nonzero seminorm because if $x \notin V$  then $q_{V}(x) \geq 1$, hence   from Theorem \ref{hahn2},  there exists a functional $x^{*}_{V} \in X^{*}$ such that    $q_{V}( T_{\mu}x - T_{\mu}y) = \langle  T_{\mu}x - T_{\mu}y , x^{*}_{V}  \rangle$ and $q_{V}^{*}(x^{*}_{V})=1$, and since    from Theorem 3.7 in  \cite{sob},  $q_{V} (z)\leq 1$ for each $z \in V$, we conclude that  $\langle z ,  x^{*}_{V}\rangle \leq 1  $ for all $z \in V$.  Therefore     from Theorem  3.8 in \cite{sob}, $\langle z , x^{*}_{V}   \rangle \leq  q_{V}(z)$ for all $z \in E$.  Hence   for
each $t \in S$,    $x, y \in C$ that  $(x, y) \in E(G)$ and $ x^{*} \in E^{*}$,    from \eqref{ygn},  we have
\begin{align*}
q_{V}( T_{\mu}x - T_{\mu}y) =& \langle  T_{\mu}x - T_{\mu}y , x^{*}_{V}  \rangle= \mu_{t} \langle  T_{t}x - T_{t}y , x^{*}_{V}  \rangle
 \\ \leq &\| \mu\| \sup_{t}| \langle  T_{t}x - T_{t}y , x^{*}_{V}  \rangle | \\ \leq & \sup_{t}
q_{V}( T_{t}x - T_{t}y) \\ \leq &  q_{V}(x-y),
\end{align*}
 then  we have
\begin{align*}
q_{V}( T_{\mu}x -  T_{\mu}y) \leq & q_{V}(x-y),
\end{align*}
  for all   $V \in \mathcal{B}$.

(ii)
Let  $x \in Fix(\sc) $ and  $x^{*} \in  E^{*}$. Then we have
\begin{equation*}
   \langle T_{\mu}x, x^{*}  \rangle= \mu_{t} \langle T_{t}x , x^{*}  \rangle= \mu_{t} \langle x , x^{*}  \rangle=  \langle x , x^{*}  \rangle
\end{equation*}

 (iii) this assertion  concludes from  Theorem \ref{rooh}.

   (iv) for  this assertion, note that
\begin{equation*}
   \langle T_{\mu}(T_{s}x) , x^{*}  \rangle= \mu_{t} \langle T_{ts}x , x^{*}  \rangle= \mu_{t} \langle T_{t}x , x^{*}  \rangle=  \langle T_{\mu}x, x^{*}  \rangle,
\end{equation*}
 (v) Let $x \in C$ and   $V \in \mathcal{B}$.
 From Theorem \ref{hahn2},  there exists a functional $x^{*}_{V} \in X^{*}$ such that    $q_{V}(a - T_{\mu}x) = \langle a - T_{\mu}x , x^{*}_{V}  \rangle$ and $q_{V}^{*}(x^{*}_{V})=1$. Since    from Theorem 3.7 in  \cite{sob},  $q_{V} (z)\leq 1$ for each $z \in V$, we conclude that  $\langle z ,  x^{*}_{V}\rangle \leq 1  $ for all $z \in V$.  Therefore     from Theorem  3.8 in \cite{sob}, $\langle z , x^{*}_{V}   \rangle \leq  q_{V}(z)$ for all $z \in E$.  Hence   for
each $t \in S$,  $x, y \in C$ that  $(x, y) \in E(G)$ and $ x^{*} \in E^{*}$,    from \eqref{ygn},  we have
\begin{align*}
q_{V}(a - T_{\mu}x) =& \langle a - T_{\mu}x , x^{*}_{V}  \rangle= \mu_{t} \langle a - T_{t}x , x^{*}_{V}  \rangle
 \\ \leq &\| \mu\| \sup_{t}| \langle   a - T_{t}x , x^{*}_{V}  \rangle | \\ \leq & \sup_{t}
q_{V}(  a - T_{t}x) \\ \leq &  q_{V}(a-x),
\end{align*}
 then  we have
\begin{align*}
q_{V}( a - T_{\mu}x) \leq & q_{V}(a-x),
\end{align*}
  for all   $V \in \mathcal{B}$.
  \end{proof}
  \section{Discussion}
In this paper, we investigate  some  properties of the mapping $T_{\mu}$  introduced by a representation. $T_{\mu}$ is an important mapping in the literature, for example see \cite{saeidi, eees, soori}.

\section{Conclusion}
In this paper, we prove that some properties of the mapping in  the representation $ \sc=\{T_{s}:s\in S\} $ can be transferred to  the mapping  $T_{\mu}$ introduced by a mean on a subspace  of $B(S)$,   for example nonexpansiveness, quasi-nonexpansiveness, strongly quasi-nonexpansiveness, monotonicity, retraction property and another properties    in Banach spaces, and  $Q$-$G$-nonexpansiveness  using a directed graph in locally
convex spaces.

\section{ Abbreviations}
Not applicable
\section{Declarations }
\subsection{Availability of data and material}
 Please contact author for data requests.
\subsection{Funding}
Not applicable
\section{  Acknowledgements}
 The first author is  grateful to  the University of Lorestan for their support.
 \section{Competing interests}
 The authors declare that they have no competing interests.
 \section{Authors' contributions}
 All authors contributed equally to the manuscript, read and approved the final manuscript.

\bibliographystyle{amsplain}

\end{document}